\documentclass[11pt]{amsart}
\usepackage[utf8]{inputenc}
\usepackage{amsmath,amssymb}
\usepackage{wrapfig}
\usepackage{url}
\usepackage{indentfirst}
\usepackage{mathtools}
\usepackage{graphicx}
\usepackage{stmaryrd}
\usepackage{amsthm}
\usepackage{geometry}
\usepackage[colorlinks=true,linkcolor=blue,citecolor=blue]{hyperref}
\geometry{a4paper, left=2.54cm, right=2.54cm, bottom=2.54cm, top=2.54cm}

\title{Exponential Taylor Series}
\author{André Pedroso Kowacs}
\date{2022}

\allowdisplaybreaks
\theoremstyle{definition}
\newtheorem{theorem}{Theorem}
\newtheorem{corollary}{Corollary}

\newtheorem{exmp}{Example}

\newtheorem{lemma}{Lemma}
\theoremstyle{definition}
\newtheorem{definition}{Definition}

\begin{document}

\maketitle
\section{Abstract}
This paper derives a way to express differentiable complex-valued functions as the sum of powers of $(1-e^{\lambda x})$, where $\lambda\in\mathbb{R}$, with an explicit formula for the remainder. This formulation is then used to associate an infinite series to $C^\infty$ functions, which is shown to recover the original function under suitable conditions on the remainder. These results are also used to calculate some infinite series involving Stirling Numbers, as well as providing a few examples.
\section{Introduction}
This paper was inspired by the works of Michael Ruzhansky and Ville Turunen on \cite{cite1}, where they present a similar exponential Taylor series expansion, though only for the case $\lambda=2\pi i$. Ville Turunen also presented this in \cite{cite2}, where he refers to \cite{cite3}. This paper generalizes this ``exponential" Taylor series and presents a new formula for the remainder term, which allows the study of convergence of the infinite series for $C^\infty$ functions. This raises questions if one could develop a theory similar to that of analytic functions based on these Taylor expansions. It is worth noting that this exponential Taylor expansion allows us to approximate periodic functions by periodic polynomials, by taking $\lambda = 2\pi i/T$, so that the error term is also periodic, which may be useful in some situations. This paper is organized as follows: First, we introduce the main notation used throughout this paper. Then we prove our main results for functions of a single variable and present a few examples. We then conclude by proving our main result for multivariable functions.

\section{Initial Definitions and Notations}
\begin{definition}\label{def1}
Let $\lambda \in \mathbb{C}\backslash\{0\}$. We recursively define the differential operators $D^{\lambda,(j)}_x$ by $D^{\lambda,(0)}_x= \text{Id}$ and 
$$D^{\lambda,(j+1)}_x := \left(\frac{1}{\lambda} \frac{d}{dx} - j\cdot \text{Id}\right)\circ D^{\lambda,(j)}_x,$$
for every $j\in\mathbb{N}$. We shall sometimes omit the subindex in this differential operator whenever the variable under differentiation is clear.
For $k\in\mathbb{N}$, let $C^k(U)$ be the set $k$ times continuously differentiable functions from and open subset $U\subset\mathbb{R}^n$ to $\mathbb{C}$. Also, for $x,y\in\mathbb{R}^n$, let $[[x,y]]$ be the open line segment from $x$ to $y$, that is:
$$[[x,y]] = \{z\in\mathbb{R}^n : z = (1-t)x+ty,\,t\in (0,1)\}.$$
\end{definition}
\section{Main results and theorems}

\begin{theorem}\label{teo1}

Let $I\subset \mathbb{R}$ be an open interval, $a\in C^k(I)$, $\lambda\in\mathbb{C}\backslash\{0\}$. Then,
$$a(x) = \sum_{j=0}^{N-1}\frac{1}{j!}D^{\lambda,(j)}a(x_0)(e^{\lambda (x-x_0)}-1)^j + R_N(x,x_0),$$
for all $N\leq k$ and $x\in I$, where
$$R_N(x,x_0) = \frac{\lambda}{(N-1)!}\int_0^1D^{\lambda,(N)}a((1-\theta)x_0+\theta x)(e^{\lambda(1-\theta)(x-x_0)}-1)^{N-1}(x-x_0)d\theta.$$
In particular, 
\begin{align*}
|R_N(x,x_0)|&\leq |\lambda|\frac{N}{(N!}\sup_{\xi \in[[x_0,x]]}|D^{\lambda,(N)}a(\xi)(e^{\lambda (x-\xi)}-1)^{N-1}\|x-x_0|\\
&\leq \frac{|\lambda|}{(N-1)!}\sup_{\xi \in[[x_0,x]]}|D^{\lambda,(N)}a(\xi)|\sup_{-|x-x_0|\leq\eta \leq |x-x_0|}|e^{\lambda \eta}-1|^{N-1}\|x-x_0|.
\end{align*}
\end{theorem}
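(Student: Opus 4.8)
The plan is to establish the expansion by induction on $N$ (for $1 \le N \le k$), with the remainder formula itself serving as the engine of the induction. Writing $h = x - x_0$ and $\xi(\theta) = (1-\theta)x_0 + \theta x$, the base case $N=1$ reduces, after noting $D^{\lambda,(1)} = \frac{1}{\lambda}\frac{d}{dx}$ and $(e^{\lambda(1-\theta)h}-1)^0 = 1$, to the claim $R_1(x,x_0) = \int_0^1 a'(\xi(\theta))h\,d\theta$, which equals $a(x) - a(x_0)$ by the chain rule and the fundamental theorem of calculus. For the inductive step it suffices to prove the single reduction identity $R_N = \frac{1}{N!}D^{\lambda,(N)}a(x_0)(e^{\lambda h}-1)^N + R_{N+1}$, since substituting it into the level-$N$ expansion immediately produces the level-$(N+1)$ expansion.

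To prove the reduction identity I would start from $R_{N+1}$ and invoke the defining recursion $D^{\lambda,(N+1)} = (\frac{1}{\lambda}\frac{d}{dx} - N\,Id)D^{\lambda,(N)}$. Setting $g(\theta) = D^{\lambda,(N)}a(\xi(\theta))$, the chain rule gives $(D^{\lambda,(N)}a)'(\xi(\theta)) = \frac{1}{h}g'(\theta)$, so that $D^{\lambda,(N+1)}a(\xi(\theta)) = \frac{1}{\lambda h}g'(\theta) - N g(\theta)$. Substituting this into the integral defining $R_{N+1}$ and writing $p(\theta) = (e^{\lambda(1-\theta)h}-1)^N$, the remainder splits into $\frac{1}{N!}\int_0^1 g'(\theta)p(\theta)\,d\theta - \frac{\lambda h N}{N!}\int_0^1 g(\theta)p(\theta)\,d\theta$. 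I would then integrate the first integral by parts, using that $p'(\theta) = -N\lambda h\big[(e^{\lambda(1-\theta)h}-1)^N + (e^{\lambda(1-\theta)h}-1)^{N-1}\big]$, which follows from $\frac{d}{d\theta}(e^{\lambda(1-\theta)h}-1) = -\lambda h\,e^{\lambda(1-\theta)h}$ together with $e^{\lambda(1-\theta)h} = (e^{\lambda(1-\theta)h}-1) + 1$.

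The crux — and the step I expect to be the main bookkeeping obstacle — is the bracketed $+1$ above: it is exactly the affine shift distinguishing $e^{\lambda\cdot}-1$ from $e^{\lambda\cdot}$, and it is precisely the reason the operators carry the $-j\,Id$ term. After integration by parts, the boundary contributes $-g(0)p(0) = -D^{\lambda,(N)}a(x_0)(e^{\lambda h}-1)^N$ (the $\theta=1$ endpoint vanishes since $N \ge 1$), the $(e^{\lambda(1-\theta)h}-1)^N$ piece of $-\int g p'$ cancels the term $-\frac{\lambda h N}{N!}\int g p$ coming from the $-Ng$ part of the recursion, and the surviving $(e^{\lambda(1-\theta)h}-1)^{N-1}$ piece, with its prefactor $\frac{N\lambda h}{N!} = \frac{\lambda h}{(N-1)!}$, reassembles exactly into $R_N$. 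Collecting these yields the reduction identity and closes the induction. As an independent check on the principal part, the substitution $t = e^{\lambda x}$ turns $\frac{1}{\lambda}\frac{d}{dx}$ into the Euler operator $t\frac{d}{dt}$ and hence $D^{\lambda,(N)}$ into $t^N \frac{d^N}{dt^N}$, so the summed terms are the ordinary Taylor terms of $a$ in the variable $t$.

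Finally, both estimates are routine once the integral formula is in hand. Bounding $|R_N|$ by the supremum of the integrand over $\theta \in (0,1)$ times the unit length of the $\theta$-interval, and observing that $x - \xi(\theta) = (1-\theta)h$ so that $\xi$ ranges over $[[x_0,x]]$ and $e^{\lambda(1-\theta)h}-1 = e^{\lambda(x-\xi)}-1$, gives the first inequality with constant $\frac{|\lambda|}{(N-1)!} = \frac{|\lambda|N}{N!}$. The second inequality then follows by splitting the supremum of the product into the product of the two suprema and noting that $x-\xi(\theta)$ is real of modulus at most $|x-x_0|$, whence $|e^{\lambda(x-\xi)}-1|^{N-1} \le \sup_{|\eta|\le |x-x_0|}|e^{\lambda\eta}-1|^{N-1}$.
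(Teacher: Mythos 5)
Your proof is correct, and it takes a genuinely different route from the paper's. The paper argues in one shot for fixed $N$: it sets $F(t)=\sum_{j=0}^{N-1}\frac{1}{j!}D^{\lambda,(j)}a(t)(e^{\lambda(x-t)}-1)^j$ (the expansion with moving center $t$, evaluated at the fixed endpoint $x$), shows by a telescoping computation that $\frac{1}{\lambda}F'(t)=\frac{1}{(N-1)!}D^{\lambda,(N)}a(t)(e^{\lambda(x-t)}-1)^{N-1}$, and then applies the fundamental theorem of calculus on $[x_0,x]$ followed by the change of variables $t=(1-\theta)x_0+\theta x$; since $F(x)=a(x)$ and $F(x_0)$ is the partial sum, the integral formula for $R_N$ drops out. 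You instead run an induction on $N$ whose engine is the reduction identity $R_N=\frac{1}{N!}D^{\lambda,(N)}a(x_0)(e^{\lambda h}-1)^N+R_{N+1}$ (with $h=x-x_0$), proved by a single integration by parts --- the classical ``repeated integration by parts'' proof of Taylor's theorem transplanted to this setting --- and your algebra checks out: the boundary term produces the new Taylor coefficient, and the two pieces of $p'(\theta)$ split exactly as you say. Both arguments hinge on the identical cancellation mechanism: after writing $e^{\lambda z}=(e^{\lambda z}-1)+1$, the lower-power term created by differentiation is precisely what the $-j\cdot Id$ in the recursion absorbs; the paper performs this cancellation inside the telescoping sum (which it elides as ``$(\ldots)$''), while you perform it under the integral sign, one order at a time. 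Your route derives the remainder formula rather than verifying a guessed auxiliary function, and makes the regularity bookkeeping transparent (the step $N\to N+1$ uses only $a\in C^{N+1}$, guaranteed by $N+1\le k$); the paper's route avoids induction and is shorter once $F$ is written down. Two small repairs to your write-up: (i) you divide by $h$, so dispose of the trivial case $x=x_0$ separately (there $R_N=0$ and the identity is immediate); (ii) your closing remark that $u=e^{\lambda x}$ turns $D^{\lambda,(N)}$ into $u^N\frac{d^N}{du^N}$ is a correct operator identity, but reading the summed terms as an ordinary Taylor expansion of ``$a$ as a function of $u$'' is only a heuristic when $\lambda\notin\mathbb{R}$, since $u$ then runs along a curve in $\mathbb{C}$ and such a composite is not defined; keep it as a sanity check, not as part of the proof.
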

\begin{proof}
Let 
$$F(t) = \sum_{j=0}^{N-1}\frac{1}{j!}D^{\lambda,(j)}a(t)(e^{\lambda (x-t)}-1)^j.$$ Then it is easy to see that
\begin{align*}
\frac{1}{\lambda}\frac{d}{dt}F(t) &=\\
	&=\sum_{j=0}^{N-1}\frac{1}{j!}\left\{\left[\left(\frac{1}{\lambda}\frac{d}{dt}-j+j\right)D^{\lambda,(j)}a(t)\right](e^{\lambda(x-t)}-1)^j\right.\\
 &\left.+D^{\lambda,(j)}a(t)\left[\frac{1}{\lambda}\frac{d}{dt}(e^{\lambda(x-t)}-1)^j\right]\right\}\\
	&=\sum_{j=1}^{N-1}\frac{1}{j!}\left\{\left[\vphantom{e^{\lambda}}\left(D^{\lambda,(j+1)}a(t)+jD^{\lambda,(j)}a(t)\right)\right](e^{\lambda(x-t)}-1)^j+\right. \\
 &\left.+D^{\lambda,(j)}a(t)\left[-j(e^{\lambda(x-t)}-1)^{j-1}((e^{\lambda(x-t)}-1)-e^{\lambda(x-t)})\right]\right\}+D^{(\lambda,1)}a(t)\\
 &=\sum_{j=1}^{N-1}\frac{1}{j!}\left[D^{\lambda,(j+1)}a(t)(e^{\lambda(x-t)}-1)^j+D^{\lambda,(j+1)}a(t)(e^{\lambda(x-t)}-1)^j\right]\\
 &+\frac{1}{(j-1)!}D^{\lambda,(j)}a(t)(e^{\lambda(x-t)}-1)^{j-1}((e^{\lambda(x-t)}-1)-e^{\lambda(x-t)})+D^{(\lambda,1)}a(t)\\
 &=\sum_{j=1}^{N-1}\left[\frac{1}{j!}D^{\lambda,(j+1)}a(t)(e^{\lambda(x-t)}-1)^j-\frac{1}{(j-1)!}D^{\lambda,(j)}a(t)(e^{\lambda(x-t)}-1)^{j-1}\right]\\
 &+\frac{1}{0!}D^{\lambda,(1)}a(t)(e^{\lambda(x-t)}-1)^0\\
&=\frac{1}{(N-1)!}D^{\lambda,(N)}a(t)(e^{\lambda(x-t)}-1)^{N-1},
\end{align*}
where on the last line we used that the sum above it is a telescopic sum. 
The result then follows from applying the fundamental theorem of calculus to $F$, namely:
$$F(x)-F(x_0) = \int_{x_0}^x \lambda\frac{1}{\lambda}\frac{d}{dt}F(t)dt, $$
and then performing the change of variables $t = (1-\theta)x_0+\theta x$.
\end{proof}

Notice that by taking $\lambda = \frac{2\pi i}{T}$, for some $T>0$, the sum in the expansion above is $T$-periodic, so that if $a$ is also $T$-periodic the remainder term $R_N(x,x_0)$ is also $T$-periodic. Moreover, note that in this case $|e^{\lambda \eta}-1|\leq 2$, for all $\eta\in\mathbb{R}$, and, in particular, $|e^{\lambda \eta}-1|<1$ for $|\eta|<\frac{|T|}{6}$.

\begin{corollary}\label{coro}
Let $0\neq \lambda\in\mathbb{C}$. Let $x_0,x\in\mathbb{R}$ and $a\in C^\infty(\mathbb{R})$ such that
$$\frac{1}{(N-1)!}\sup_{\xi \in[[x_0,x]]}|D^{\lambda,(N)}a(\xi)(e^{\lambda (x-\xi)}-1)^{N-1}|\to 0,$$
as $N\to \infty$. Then 
$$a(x) = \sum_{j=0}^{\infty}\frac{1}{j!}D^{\lambda,(j)}a(x_0)(e^{\lambda (x-x_0)}-1)^j.$$
In particular, if $a$ is $T$-periodic, $\lambda = \frac{2\pi i}{T}$, $T>0$ and
$$\sup_{\xi \in[0,T]}|D^{\lambda,(N)}a(\xi)|\leq C_0 (N+k)!,$$
for some $C_0>0,\,k\in\mathbb{N}$, and every $N\in\mathbb{N}$, then the series above converges uniformly to $a$ in $[x_0-T/6,x_0+T/6]\subset \mathbb{R}$.

\end{corollary}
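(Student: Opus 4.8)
The plan is to obtain both assertions directly from the remainder estimate in Theorem~\ref{teo1}, so no new machinery is needed. For the first statement, fix $x$ and $x_0$. Theorem~\ref{teo1} gives, for every $N$,
\[ a(x) = \sum_{j=0}^{N-1}\frac{1}{j!}D^{\lambda,(j)}a(x_0)(e^{\lambda(x-x_0)}-1)^j + R_N(x,x_0), \]
together with the bound (writing $N/N! = 1/(N-1)!$)
\[ |R_N(x,x_0)| \le \frac{|\lambda|}{(N-1)!}\sup_{\xi\in[[x_0,x]]}\left|D^{\lambda,(N)}a(\xi)(e^{\lambda(x-\xi)}-1)^{N-1}\right|\,|x-x_0|. \]
Since $|\lambda|$ and $|x-x_0|$ are fixed, the hypothesis forces $R_N(x,x_0)\to 0$, and letting $N\to\infty$ in the identity yields the claimed series representation. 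This step is essentially immediate.

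For the periodic case I would first reduce the supremum over the segment to a supremum over one period. A short induction on $j$ shows that $D^{\lambda,(j)}a$ is $T$-periodic whenever $a$ is: the base case is trivial, and $D^{\lambda,(j+1)}$ is built only from $\frac{1}{\lambda}\frac{d}{dx}$ and scalar multiplication, both of which preserve $T$-periodicity. Hence, by continuity and periodicity, $\sup_{\xi\in[[x_0,x]]}|D^{\lambda,(N)}a(\xi)| \le \sup_{\xi\in[0,T]}|D^{\lambda,(N)}a(\xi)| \le C_0(N+k)!$. Next I would control the exponential factor: for $x\in[x_0-T/6,x_0+T/6]$ and $\xi$ in the segment $[[x_0,x]]$ one has $|x-\xi| \le |x-x_0| \le T/6$, so by the remark preceding the corollary $|e^{\lambda(x-\xi)}-1| = 2|\sin(\pi(x-\xi)/T)| \le q(x)$, where $q(x) = 2\sin(\pi|x-x_0|/T)$. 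Substituting both estimates into the remainder bound gives
\[ |R_N(x,x_0)| \le \frac{|\lambda|\,C_0\,|x-x_0|}{(N-1)!}\,(N+k)!\,q(x)^{N-1}. \]

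The decisive observation is that $\frac{(N+k)!}{(N-1)!} = \prod_{i=0}^{k}(N+i)$ is a polynomial in $N$ of degree $k+1$, whereas $q(x)^{N-1}$ decays geometrically as soon as $q(x) < 1$. On any compact subset of the open interval $(x_0-T/6,x_0+T/6)$ one has $q(x) \le q_0 < 1$ uniformly, so the geometric factor dominates the polynomial one and $\sup_x |R_N(x,x_0)| \to 0$; this establishes uniform convergence on such compacta.

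The step I expect to be the main obstacle is reaching the \emph{endpoints} $x = x_0 \pm T/6$, where $q(x) = 1$ and the estimate above degrades to $|R_N| \lesssim N^{k}$, which does not tend to $0$. Returning to the integral formula for $R_N$ and substituting $u = 1-\theta$ does not rescue the argument: the factor $(2\sin(\pi u\,|x-x_0|/T))^{N-1}$ then equals $1$ only at the single point $u=1$ and the resulting integral is of order $1/N$, so one still only obtains $|R_N| \lesssim N^{k}$ at the boundary. Consequently I would either state the conclusion as uniform convergence on every compact subset of the open interval $(x_0-T/6,x_0+T/6)$ (equivalently, on $[x_0-r,x_0+r]$ for each $r<T/6$), or else supply an additional ingredient at the boundary — for instance an Abel-type summation exploiting the oscillation of $(e^{\lambda(x-x_0)}-1)^j$, or a strengthened growth hypothesis — to cover the closed interval exactly as stated.
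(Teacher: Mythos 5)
Your argument is correct and follows the route the paper intends: the paper supplies no separate proof of this corollary, treating it as immediate from Theorem \ref{teo1} together with the remark preceding it (that $|e^{\lambda\eta}-1|\leq 2$ for $\lambda = 2\pi i/T$, and $|e^{\lambda\eta}-1|<1$ for $|\eta|<T/6$). Your first part is exactly that observation, and your reductions in the periodic case --- periodicity of $D^{\lambda,(N)}a$, so that the supremum over $[[x_0,x]]$ is bounded by the supremum over $[0,T]$, and monotonicity of $s\mapsto 2\sin(\pi s/T)$ on $[0,T/2]$ to control the exponential factor --- are precisely the intended ingredients.

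Your worry about the endpoints is not a defect of your proof: the closed-interval claim in the corollary is in fact false as stated, and the paper's own first example witnesses this. For $a(x)=\cos(2\pi x)$, $T=1$, $\lambda=2\pi i$, $x_0=0$, one has $D^{2\pi i,(j)}a(\xi)=\frac{(-1)^j}{2}\,j!\,e^{-2\pi i\xi}$ for $j\geq 2$, so the growth hypothesis holds with $C_0=\frac{1}{2}$ and any $k$; yet at the endpoint $x=\frac{1}{6}$ the general term of the series is $\frac{(-1)^j}{2}(e^{i\pi/3}-1)^j$, which has constant modulus $\frac{1}{2}$ because $|e^{i\pi/3}-1|=2\sin(\pi/6)=1$, so the series diverges there (consistently, the example itself asserts the expansion only for $|x|<1/6$). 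Hence no refinement --- Abel summation or otherwise --- can recover uniform convergence on the closed interval $[x_0-T/6,x_0+T/6]$ under the stated hypotheses, and your proposed correction (uniform convergence on $[x_0-r,x_0+r]$ for each $r<T/6$, equivalently on compact subsets of the open interval) is the statement that should replace it. Your quantitative endpoint analysis, showing that even the integral form of the remainder only yields $|R_N|\lesssim N^{k}$ there, is also accurate.
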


This motivates the following definition:
\begin{definition}
Let $a\in C^\infty(\mathbb{R}),\,0\neq\lambda\in\mathbb{C}$ and $x_0\in\mathbb{R}$. We define its $\lambda$-exponential Taylor series centered at $x_0$ as the formal series:
$$S_{a,\lambda,x_0}(x) = \sum_{j=0}^{\infty}\frac{1}{j!}D^{\lambda,(j)}a(x_0)(e^{\lambda (x-x_0)}-1)^j.$$
\end{definition}

\begin{theorem}
For $a\in\ C^\infty(\mathbb{R}),\,0\neq\lambda\in\mathbb{C}$ and $x_0\in\mathbb{R}$, the $\lambda$-exponential Taylor series of $a$ centered at $x_0$ converges for all $x$ such that
$$|e^{\lambda(x-x_0)}-1|<\limsup_{j\to\infty}\frac{(j+1)|D^{\lambda,(j)}a(x_0)|}{|D^{\lambda,(j+1)}a(x_0)|}.$$
In particular, if $\lambda = \frac{2\pi i}{T}$, $T>0$, then it converges for all $x$ such that 
$$|x-x_0|<\frac{T\arcsin\left(\frac{r_{x_0}}{2}\right)}{\pi},$$
where
$$ r_{x_0} = \limsup_{j\to\infty}\frac{(j+1)|D^{\lambda,(j)}a(x_0)|}{|D^{\lambda,(j+1)}a(x_0)|},$$
if $r_{x_0}\leq 2$, or for all $x$ otherwise.
\end{theorem}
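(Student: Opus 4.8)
The plan is to recognize the $\lambda$-exponential Taylor series as an ordinary complex power series in the auxiliary variable $w = e^{\lambda(x-x_0)}-1$. Writing $c_j = \frac{1}{j!}D^{\lambda,(j)}a(x_0)$, the series $S_{a,\lambda,x_0}(x)$ becomes $\sum_{j=0}^{\infty} c_j w^j$, whose convergence is governed entirely by the modulus $|w|$. Thus the whole problem splits into two independent tasks: (i) find the radius of convergence $\rho$ of $\sum c_j w^j$ in the $w$-plane, and (ii) translate the condition $|w| < \rho$ back into a condition on $x-x_0$.

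For (i) I would apply the ratio test. The ratio of consecutive coefficients is
$$\frac{|c_j|}{|c_{j+1}|} = \frac{|D^{\lambda,(j)}a(x_0)|/j!}{|D^{\lambda,(j+1)}a(x_0)|/(j+1)!} = (j+1)\frac{|D^{\lambda,(j)}a(x_0)|}{|D^{\lambda,(j+1)}a(x_0)|}.$$
Since $(j+1)/j \to 1$, replacing the prefactor $j+1$ by $j$ leaves the $\limsup$ unchanged, so the quantity $\limsup_j \frac{j|D^{\lambda,(j)}a(x_0)|}{|D^{\lambda,(j+1)}a(x_0)|}$ appearing in the statement coincides with $\limsup_j \frac{|c_j|}{|c_{j+1}|}$, which is the candidate radius $\rho$. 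Convergence for $|w| < \rho$ then gives the first displayed condition directly.

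For (ii) I specialize to $\lambda = 2\pi i/T$ with $T>0$ and $x,x_0$ real. Setting $\phi = 2\pi(x-x_0)/T$, a direct computation gives $|w|^2 = |e^{i\phi}-1|^2 = 2 - 2\cos\phi = 4\sin^2(\phi/2)$, hence $|w| = 2\,|\sin(\pi(x-x_0)/T)|$. The convergence condition $|w| < r_{x_0}$ becomes $|\sin(\pi(x-x_0)/T)| < r_{x_0}/2$. If $r_{x_0} > 2$ the right-hand side exceeds $1$, so the inequality holds for every real $x$; if $r_{x_0} \le 2$, inverting $\sin$ on its principal branch (valid since $|\sin|$ is increasing on $[0,\pi/2]$ and $\arcsin(r_{x_0}/2)\le\pi/2$) over the half-period $|x-x_0|\le T/2$ yields $|x-x_0| < \frac{T}{\pi}\arcsin\frac{r_{x_0}}{2}$, as claimed.

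The one genuinely delicate point is the role of $\limsup$ in the ratio test: d'Alembert's sufficient condition for $\sum c_j w^j$ to converge is $|w|\,\limsup_j |c_{j+1}/c_j| < 1$, i.e. $|w| < \liminf_j |c_j/c_{j+1}|$, whereas the theorem asserts convergence all the way up to $\limsup_j |c_j/c_{j+1}|$. These two values agree precisely when the limit of the coefficient ratios exists, which is the typical situation; to handle the general case cleanly one should instead compare against the exact Cauchy--Hadamard radius $\rho = 1/\limsup_j |c_j|^{1/j}$ and use the standard inequalities between ratio and root limits. Reconciling the stated $\limsup$ with the rigorous convergence threshold is the step I expect to require the most care; everything else is a routine power-series computation together with the elementary trigonometric inversion.
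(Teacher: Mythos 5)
Your proposal takes essentially the same route as the paper: the paper's entire proof consists of invoking the ratio test for the series viewed as a power series in $w=e^{\lambda(x-x_0)}-1$, plus Euler's identity for the periodic specialization, which is exactly your steps (i) and (ii); your trigonometric inversion in (ii) is correct and spelled out in more detail than the paper's.

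However, the delicate point you flag in your last paragraph is a genuine gap, and your proposed repair cannot close it, because it is in fact a gap in the statement itself. The ratio test gives convergence for $|w|<\liminf_j|c_j|/|c_{j+1}|$, and the standard root--ratio inequalities read
$$\liminf_{j\to\infty}\frac{|c_j|}{|c_{j+1}|}\ \leq\ \rho=\frac{1}{\limsup_{j\to\infty}|c_j|^{1/j}}\ \leq\ \limsup_{j\to\infty}\frac{|c_j|}{|c_{j+1}|},$$
so comparing with the Cauchy--Hadamard radius runs the wrong way: $\rho$ can be strictly smaller than the $\limsup$ of the coefficient ratios, and convergence up to that $\limsup$ simply cannot be established. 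Concretely, $c_{2k}=1$, $c_{2k+1}=R^{-(2k+1)}$ with $R>1$ gives $\limsup_j|c_j|/|c_{j+1}|=\infty$ but $\rho=1$, and the series diverges for $|w|>1$. Moreover, since by the paper's Lemma the operator $D^{\lambda,(N)}$ is a triangular combination of the ordinary derivatives $\frac{d^j}{dx^j}$, $j\leq N$, with nonzero leading coefficient $\lambda^{-N}$, Borel's theorem produces an $a\in C^\infty(\mathbb{R})$ realizing $D^{\lambda,(j)}a(x_0)=j!\,c_j$ for any prescribed sequence, so such coefficients do occur and the theorem as stated (with $\limsup$) is false in general. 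The paper's own one-line proof commits exactly the misapplication you describe: the ratio test it quotes (correctly) yields convergence for $|w|<\liminf_j|c_j|/|c_{j+1}|$, not up to the $\limsup$. The honest conclusion --- and what your argument does prove --- is the statement with $\liminf$ in place of $\limsup$, which coincides with the stated one under the additional hypothesis that the limit of the coefficient ratios exists.
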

\begin{proof}
The proof follows directly from the ratio test for series, that is, the series $\sum_{n\in\mathbb{N}}a_n$ converges absolutely if $\limsup\frac{|a_{n+1}|}{|a_{n}|}<1$. The second claim follows from Euler's identity for the complex exponential function.
\end{proof}

\begin{exmp}[$1$-periodic series for the cosine] Let $a(x) = \cos(2\pi x)$, $\lambda = 2\pi i$ and $x_0= 0$. Then a simple calculation yields:
\begin{align}\label{cos}
D^{2\pi i,(j)}\cos(2\pi x) = \begin{cases}\frac{(-1)^j}{2} j!e^{-2\pi i x},\,j\geq 2\\
i\sin(2\pi x),\,j=1\\
\cos(2\pi x), j=0,
\end{cases}
\end{align}
so that 
\begin{equation}\label{cosseries}
    \cos(2\pi x) = 1+\sum_{j=2}^\infty\frac{(-1)^j}{2}(e^{2\pi i x}-1)^j,
\end{equation}
for $|x|< 1/6$. Indeed we will prove \eqref{cos} by induction. The cases $j=0$ and $j=1$ can be easily verified by direct calculation. Suppose now that the formula holds for some $j\geq 2$. Then
\begin{align*}
    D^{2\pi i,(j+1)}\cos(2\pi x)&=\left(\frac{1}{2\pi i}\frac{d}{dx}-j\cdot\text{Id}\right)D^{2\pi i,(j)}\cos(2\pi x)\\
    &=\left(\frac{1}{2\pi i}\frac{d}{dx}-j\cdot\text{Id}\right)\frac{(-1)^j}{2} j!e^{-2\pi i x}\\
    &=\frac{(-1)^{j+1}}{2} j!e^{-2\pi i x}+j\frac{(-1)^{j+1}}{2} j!e^{-2\pi i x}\\
    &=\frac{(-1)^{j+1}}{2} (j+1)!e^{-2\pi i x},
\end{align*}
proving that \eqref{cos} holds in this case also, and therefore for every $j\in\mathbb{N}$. The identity \eqref{cosseries} then follows from Corollary \ref{coro}.
\end{exmp}

\begin{exmp}
Let $a(x) = \log(1-e^{i x})$, $\lambda = i$ and $x_0= \pi$. Then
\begin{align}\label{log}
D^{i,(j)}_x\log(1-e^{ix}) = \begin{cases}-(j-1)!\dfrac{e^{ijx}}{(1-e^{ix})^{j}},\,j\geq 1\\
\log(1-e^{i x}),\,\qquad\quad j=0,
\end{cases}
\end{align}
so that 
\begin{equation}\label{logseries}
    \log(1-e^{i x}) = \log(2)+\sum_{j=1}^\infty\frac{(-1)^{j+1}}{j\cdot2^j}(e^{i(x-\pi)}-1)^j,
\end{equation}
for $|x-\pi|<\frac{1}{3}$. Formula \eqref{log} can be proved by induction similarly to the previous example, and so \eqref{logseries} follows from Corollary \ref{coro} once again.
\end{exmp}

\begin{lemma} For all $n\in\mathbb{N}$, we have that
\begin{equation}\label{stirling}
    D^{\lambda,(n)}_t=\sum_{j=0}^{n}S^{(j)}_{(n)}\frac{1}{\lambda^j}\frac{d^j}{(dt^{j})},
\end{equation}
where $S^{(j)}_{(n)}$ is the $(n,k)$-th Stirling number of first kind, that is, $x(x-1)...(x-n+1) = \sum_{k=0}^nS^{(k)}_{(n)}x^k$.
\end{lemma}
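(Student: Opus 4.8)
The plan is to unwind the recursive definition of $D^{\lambda,(N)}$ into an explicit product of first-order operators and then to recognize that product as the falling factorial polynomial, whose coefficients are by definition the Stirling numbers of the first kind. Writing $y = \frac{1}{\lambda}\frac{d}{dt}$, the recursion $D^{\lambda,(j+1)} = (y - j)D^{\lambda,(j)}$ together with $D^{\lambda,(0)} = \mathrm{Id}$ telescopes to
$$D^{\lambda,(N)} = \bigl(y-(N-1)\bigr)\bigl(y-(N-2)\bigr)\cdots(y-1)\,y = \prod_{i=0}^{N-1}(y-i).$$
First I would establish this product form by a short induction on $N$, which is immediate from the defining recursion.

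The key observation is that every factor $(y-i)$ is a polynomial in the single operator $y = \frac{1}{\lambda}\frac{d}{dt}$, and such operators commute, being scalar multiples of iterated derivatives. Consequently the product above may be manipulated exactly as an element of the commutative polynomial ring $\mathbb{C}[y]$, with no ordering ambiguity. Thus $\prod_{i=0}^{N-1}(y-i)$ equals the falling factorial $y(y-1)\cdots(y-N+1)$, which is precisely the polynomial whose expansion defines the Stirling numbers of the first kind, namely $y(y-1)\cdots(y-N+1) = \sum_{j=0}^{N} S^{(j)}_{(N)}\, y^j$.

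It then remains to substitute back $y^j = \frac{1}{\lambda^j}\frac{d^j}{dt^j}$ and to discard the $j=0$ term. Since for $N \geq 1$ the falling factorial contains the factor $y$ and hence has no constant term, one has $S^{(0)}_{(N)} = 0$, so the sum may be started at $j = 1$, yielding the claimed identity $D^{\lambda,(N)} = \sum_{j=1}^{N} S^{(j)}_{(N)}\, \frac{1}{\lambda^j}\frac{d^j}{dt^j}$.

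As for obstacles, there is no substantial analytic difficulty here: the entire content is the commutativity of the factors and the definitional identification with the falling factorial. The only point requiring care is the index bookkeeping and the justification that a formal polynomial identity in $y$ transfers to a genuine identity of differential operators, which is immediate once commutativity is noted. Alternatively, one could bypass the product form and argue by induction directly on the claimed formula, using the base case $D^{\lambda,(1)} = \frac{1}{\lambda}\frac{d}{dt}$ (so that $S^{(1)}_{(1)} = 1$) together with the recurrence $S^{(j)}_{(N+1)} = S^{(j-1)}_{(N)} - N\, S^{(j)}_{(N)}$ for the Stirling numbers of the first kind; applying $\bigl(\frac{1}{\lambda}\frac{d}{dt} - N\bigr)$ to the inductive hypothesis and collecting powers of $\frac{1}{\lambda}\frac{d}{dt}$ reproduces exactly this recurrence.
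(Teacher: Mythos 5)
Your proposal is correct and takes essentially the same route as the paper: the paper's entire proof is the one-line remark that the lemma ``follows from induction, directly from the definition of Stirling numbers,'' which is precisely what you carry out, both via the commuting product form $\prod_{i=0}^{N-1}\bigl(\tfrac{1}{\lambda}\tfrac{d}{dt}-i\bigr)$ identified with the falling factorial, and in your alternative direct induction using the recurrence $S^{(j)}_{(N+1)} = S^{(j-1)}_{(N)} - N\,S^{(j)}_{(N)}$. Your write-up simply supplies the details (commutativity of the factors, vanishing of $S^{(0)}_{(N)}$ for $N\geq 1$) that the paper leaves implicit.
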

\begin{proof}
We prove this by mathematical induction. Identity \eqref{stirling} is clear in the case $n=1$. Suppose the formula above holds for some $n\in\mathbb{N}$. Then 
\begin{align*}
    D^{\lambda,(n+1)}&=\left(\frac{1}{\lambda}\frac{d}{dt}-n\cdot\text{Id}\right) D^{\lambda,(n)}\\
    &=\left(\frac{1}{\lambda}\frac{d}{dt}-n\cdot\text{Id}\right)\sum_{j=0}^{n}S^{(j)}_{(n)}\frac{1}{\lambda^j}\frac{d^j}{(dt^j)}\\
    &=\sum_{j=0}^{n}S^{(j)}_{(n)}\frac{1}{\lambda^{j+1}}\frac{d^{j+1}}{(dt^{j+1})}-\sum_{j=0}^{n}nS^{(j)}_{(n)}\frac{1}{\lambda^j}\frac{d^j}{(dt^{j})}\\
    &=S_{(n)}^{(n)}\frac{1}{\lambda^{n+1}}\frac{d^{n+1}}{dt^{n+1}}-nS_{(n)}^{(0)}\text{Id}+\sum_{j=1}^{n}\left(S^{(j-1)}_{(n)}-nS^{(j)}_{(n)}\right)\frac{1}{\lambda^{j}}\frac{d^{j}}{(dt^{j})}\\
    &=\left(S_{(n)}^{(n)}-nS^{(n+1)}_{(n)}\right)\frac{1}{\lambda^{n+1}}\frac{d^{n+1}}{dt^{n+1}}+\sum_{j=1}^{n}\left(S^{(j-1)}_{(n)}-nS^{(j)}_{(n)}\right)+S^{(0)}_{(n+1)}\text{Id}\\
    &=\sum_{j=0}^{n+1}S^{(j)}_{(n+1)}\frac{1}{\lambda^{j}}\frac{d^{j}}{(dt^{j})},
\end{align*}
where in the last identity we used the fact that $n\geq 1$ and so $S^{(0)}_(n)=S^{(0)}_{n+1}=0$. This proves the identity \eqref{stirling} hold for $n+1$ and therefore for every $n\in\mathbb{N}$.
\end{proof}

\begin{exmp} Let $a(x) = x$, $\lambda = 2\pi i$ and $x_0= 0$. Then by the previous lemma, a simple calculation yields:
\begin{align}
D^{2\pi i,(j)}_xx = \begin{cases}x ,\,\qquad\qquad \qquad \quad j=0\\
\frac{1}{2\pi i}(-1)^{j-1}(j-1)!,\,j\geq 1,\label{diffx}
\end{cases}
\end{align}
so that 
$$x = \sum_{j=1}^\infty\frac{(-1)^{(j-1)}}{j2\pi i}(e^{2\pi i x}-1)^j,$$
for $|x|\leq 1/6$. Indeed, notice that the identity \eqref{diffx} follows directly from Lemma \ref{stirling} using the fact that $S_{(n)}^{(j)}=(-1)^{n-1}(n-1)!$.
\noindent Furthermore, by taking $\lambda = \log(k),\,1<k\in\mathbb{N}$, then 
\begin{align*}
D^{\log(k),(j)}_xx = \begin{cases}x ,\,\qquad\qquad \qquad \quad j=0\\
\frac{1}{\log(k)}(-1)^{j-1}(j-1)!,\,j\geq 1,
\end{cases}
\end{align*}
so that 
$$x = \sum_{j=1}^\infty\frac{(-1)^{(j-1)}}{j\log(k)}(k^x-1)^j.$$
For $k^x-1<1$. In particular, by taking  $x=-1$, we obtain
$$\sum_{j=1}^\infty\frac{1}{j}\left(\frac{k-1}{k}\right)^j=\log(k).$$ 
Similarly, taking $a(x)=x^2,\lambda=\log(2),x_0=0$, we obtain, by taking $x=1$ and $x=-1$, respectively:
$$\sum_{j=2}^\infty\frac{S^{(2)}_{(j)}}{j!} = \frac{\log(2)^2}{2}$$
$$\sum_{j=2}^\infty\frac{(-1)^jS^{(2)}_{(j)}}{2^jj!} = \frac{\log(2)^2}{2}.$$
More generally, we obtain:
$$\sum_{j=k}^\infty\frac{(-1)^jS^{(k)}_{(j)}}{j!} = \frac{\log(2)^k}{k!}$$
$$\sum_{j=k}^\infty\frac{(-1)^jS^{(k)}_{(j)}}{2^jj!} = (-1)^k\frac{\log(2)^k}{k!},$$
for all $k\in\mathbb{N}$.
\end{exmp}

Next, we develop the multivariable case. The commonly used approach for the usual Taylor series does not work well here, but the idea is also to use the 1 dimensional case to prove the general one. Notice it is necessary to consider functions defined in ``rectangles". 
First, consider the following multi-index notation:
\begin{definition}
Let $\gamma\in\mathbb{N}_0^n$ be a multi-index, $x = (x_1,...,x_n)\in\mathbb{R}^n.$ Define the partial differential operator:
$$D^{\lambda,(\gamma)}_x = D_{x_1}^{\lambda,(\gamma_1)}...D_{x_n}^{\lambda,(\gamma_n)},$$
where $D_{x_i}^{\lambda,(j)}$ is the differential operator defined in Definition \ref{def1}, applied on the variable $x_i$. Let also:
$$(e^{\lambda x}-1)^\gamma = (e^{\lambda x_1}-1)^{\gamma_1}...(e^{\lambda x_n}-1)^{\gamma_n}.$$
As usual, let $\|x\|_{\infty} = \max_{1\leq i \leq n}\{{|x_i|}\}$ and given another $\tilde{x}\in\mathbb{R}^n$, consider the set:
$$Q(x,\tilde{x}) = \{y\in\mathbb{R}^n | y_j\in \overline{[[x_j,\tilde{x}_j]]},1\leq j\leq n\},$$
that is, $y\in Q(x,\tilde{x})$ if and only if each of its coordinates lies between the corresponding $x$ and $\tilde{x}$ coordinate. 
\end{definition}
We are now able to state the multivariable version of Theorem \ref{teo1}:
\begin{theorem}
Let $n\in\mathbb{N}$, $a\in C^k(U_1\times ...\times U_n)$, where each $U_i\subset \mathbb{R}$ is open, $0\neq \lambda\in \mathbb{C},\,\tilde{x}\in\mathbb{R}^n$. Then
$$a(x) = \sum_{|\gamma|<N}\frac{1}{\gamma!}D^{\lambda,(\gamma)}a(\tilde{x})(e^{\lambda(x-\tilde{x})}-1)^\gamma + R_N(x,\tilde{x}),$$
where
\begin{align*}
|R_N(x,\tilde{x})| &\leq |\lambda|\left[\sum_{|\gamma| = N}\frac{N}{\gamma!}\sup_{y\in Q(x,\tilde{x})}|D^{\lambda,(\gamma)}a(y)|\right]\epsilon(\lambda,\|x-\tilde{x}\|_\infty)^{N-1} \|x-\tilde{x}\|_\infty,\\
\end{align*}
with
$$\epsilon(\lambda,r) \doteq \sup_{-r\leq z\leq r}|e^{\lambda z}-1|,$$
for each $N\leq k$.

\end{theorem}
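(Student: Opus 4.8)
The plan is to argue by induction on the dimension $n$, reducing everything to the one-dimensional Theorem~\ref{teo1} applied one variable at a time. The base case $n=1$ is exactly Theorem~\ref{teo1} (its second remainder estimate), after noting that for a single variable $\gamma=(N)$ gives $\gamma!=N!$, so that $\tfrac{N}{\gamma!}=\tfrac{1}{(N-1)!}$ and $\epsilon(\lambda,\|x-\tilde x\|_\infty)=\epsilon(\lambda,|x-\tilde x|)$. I write $x=(x',x_n)$ with $x'=(x_1,\dots,x_{n-1})$, and similarly $\tilde x=(\tilde x',\tilde x_n)$.

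For the inductive step I would first apply Theorem~\ref{teo1} in the single variable $x_n$, treating $x'$ as a parameter and expanding to order $N$:
\begin{equation*}
a(x',x_n)=\sum_{j=0}^{N-1}\frac{1}{j!}D_{x_n}^{\lambda,(j)}a(x',\tilde x_n)\,(e^{\lambda(x_n-\tilde x_n)}-1)^j+\widetilde R_N(x',x_n).
\end{equation*}
Then, for each $0\le j\le N-1$, the coefficient $x'\mapsto D_{x_n}^{\lambda,(j)}a(x',\tilde x_n)$ is a $C^{k-j}$ function of $n-1$ variables on a rectangle, to which I apply the inductive hypothesis \emph{at order $N-j$}. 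The crucial observation is that this choice of order turns the box-shaped truncation into the desired total-degree truncation: writing $\gamma=(\gamma',j)$, the inner condition $|\gamma'|<N-j$ is equivalent to $|\gamma|<N$, and since the partial operators in distinct variables commute on $C^k$ functions, $D^{\lambda,(\gamma')}_{x'}D_{x_n}^{\lambda,(j)}=D^{\lambda,(\gamma)}$ and $\gamma!=\gamma'!\,j!$. Summing the two expansions therefore produces exactly $\sum_{|\gamma|<N}\tfrac{1}{\gamma!}D^{\lambda,(\gamma)}a(\tilde x)(e^{\lambda(x-\tilde x)}-1)^\gamma$ as the main term.

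It remains to collect the remainder, which is $R_N=\widetilde R_N+\sum_{j=0}^{N-1}\tfrac{1}{j!}(e^{\lambda(x_n-\tilde x_n)}-1)^j R^{(n-1)}_{N-j}$, where $R^{(n-1)}_{N-j}$ is the order-$(N-j)$ remainder of the $(n-1)$-dimensional expansion of the $j$-th coefficient. Here I would use two elementary facts: that $r\mapsto\epsilon(\lambda,r)$ is nondecreasing (as a supremum over a nested family of intervals), so both $\epsilon(\lambda,|x_n-\tilde x_n|)$ and $\epsilon(\lambda,\|x'-\tilde x'\|_\infty)$ are bounded by $\epsilon(\lambda,\|x-\tilde x\|_\infty)$; and that $|e^{\lambda(x_n-\tilde x_n)}-1|\le\epsilon(\lambda,\|x-\tilde x\|_\infty)$, so $|(e^{\lambda(x_n-\tilde x_n)}-1)^j|\le\epsilon(\lambda,\|x-\tilde x\|_\infty)^j$. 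Bounding $\widetilde R_N$ by Theorem~\ref{teo1} and each $R^{(n-1)}_{N-j}$ by the inductive estimate, every term acquires the common factor $|\lambda|\,\epsilon(\lambda,\|x-\tilde x\|_\infty)^{N-1}\|x-\tilde x\|_\infty$, the leftover combinatorial weight attached to a multi-index $\gamma=(\gamma',j)$ being $\tfrac{N-j}{\gamma!}$ (with $\tfrac{N}{N!}$ for the pure $x_n$ term coming from $\widetilde R_N$).

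The main obstacle, and the reason the classical line-segment approach fails, is precisely this bookkeeping of truncation orders: one must verify that as $j$ ranges over $0,\dots,N$ the multi-indices $(\gamma',j)$ with $|\gamma'|=N-j$ enumerate \emph{every} $\gamma$ with $|\gamma|=N$ exactly once, and that the weight $\tfrac{N-j}{\gamma!}$ never exceeds the target weight $\tfrac{N}{\gamma!}$ since $0\le N-j\le N$. Once this is checked, summing dominates $R_N$ by $|\lambda|\bigl[\sum_{|\gamma|=N}\tfrac{N}{\gamma!}\sup_{y\in Q(x,\tilde x)}|D^{\lambda,(\gamma)}a(y)|\bigr]\epsilon(\lambda,\|x-\tilde x\|_\infty)^{N-1}\|x-\tilde x\|_\infty$, which is the claimed estimate. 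The restriction to a rectangular domain enters here, ensuring that the slices used in the iterated one-dimensional expansions stay inside $U_1\times\cdots\times U_n$ and that $Q(x,\tilde x)$ contains all the intermediate evaluation points.
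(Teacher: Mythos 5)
Your proposal is correct and takes essentially the same approach as the paper: induction on the dimension, iterated application of Theorem~\ref{teo1} with the coefficient functions expanded to the complementary orders $N-j$, the same enumeration of $\{\gamma : |\gamma|=N\}$ by the split-off remainders, and the same use of monotonicity of $\epsilon(\lambda,\cdot)$ to merge the exponential factors. The only (inessential) difference is the order of operations: you expand in $x_n$ first via Theorem~\ref{teo1} and then apply the inductive hypothesis to the coefficients, whereas the paper applies the inductive hypothesis in $x'$ first and then expands each coefficient in $x_n$.
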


\begin{proof}
We prove it by induction in $n\in\mathbb{N}$. The case $n=1$ follows trivially from Theorem \ref{teo1}. Let $n>1$ and suppose the theorem is true for $n-1\in\mathbb{N}$. For $x\in\mathbb{R}^n$, write $x=(x',x_n)$, where $x'= (x_1,...,x_{n-1})\in\mathbb{R}^{n-1},\,x_n\in\mathbb{R}$. For each $x_n\in U_n$ fixed, consider the function $a_{x_n}(x_1,...,x_{n-1}) = a(x_1,...,x_n)$, that is, $a_{x_n}(x') = a(x)$. Then such function is of class $C^k$ in $U_1\times...\times U_{n-1}\subset\mathbb{R}^{n-1}$, so by the inductive hypothesis:
\begin{equation*}
a_{x_n}(x') = \sum_{|\gamma|<N}\frac{1}{\gamma!}D^{\lambda,(\gamma)}a_{x_n}(\tilde{x}')(e^{\lambda(x'-\tilde{x}')}-1)^\gamma + R_{N,x_n}(x',\tilde{x}'),
\end{equation*}
where $\gamma\in\mathbb{N}_0^{n-1}$, for each $x_n\in U_n$. Moreover,
\begin{align*}
R_{N,x_n}(x',\tilde{x}') &=|\lambda|\left[\sum_{\substack{|\gamma| = N\\ \gamma\in\mathbb{N}_0^{n-1}}}\frac{N}{\gamma!}\sup_{y\in Q(x',\tilde{x}')}|D^{\lambda,(\gamma)}a_{x_n}(y)|\right]\\
&\times\sup_{-\|x'-\tilde{x}'\|_{\infty}\leq z\leq \|x'-\tilde{x}'\|_{\infty}}|e^{\lambda z}-1|^{N-1}\|x'-\tilde{x}'\|_\infty.
\end{align*}
Since $\|x'-\tilde{x}'\|_\infty\leq \|x-\tilde{x}\|_\infty$ and $Q(x',\tilde{x}')\times \{x_n\}\subset Q(x,\tilde{x})$, and
since for each $\gamma\in\mathbb{N}_0^{n-1},\,|\gamma|<N$, we can identify $\gamma = (\gamma,0)\in\mathbb{N}_0^n$, such that for every $\tau\in \mathbb{N}_0^n$. We also have $\tau = (\tau',\tau_n) = (\tau',0) +(0,\tau_n)$, $\tau'\in\mathbb{N}_0^{n-1}.$ Therefore $D^{\lambda,(\gamma)}a_{x_n}(\tilde{x}') = D^{\lambda,((\gamma,0))}a(\tilde{x}',x_n)$  that is 
\begin{equation}\label{eq1}
a(x',x_n) = \sum_{|\gamma|<N}\frac{1}{\gamma!}D^{\lambda,((\gamma,0))}a(\tilde{x}',x_n)(e^{\lambda(x-\tilde{x})}-1)^{(\gamma,0)} + R_{N,x_n}(x',\tilde{x}'),
\end{equation}
where
\begin{align}\label{errorbasecase}
R_{N,x_n}(x',\tilde{x}')&=|\lambda|\left[\sum_{\substack{|\gamma| = N\\ \gamma\in\mathbb{N}_0^{n-1}}}\frac{N}{\gamma!}\sup_{y\in Q(x,\tilde{x})}|D^{\lambda,((\gamma,0))}a(y)|\right]\nonumber  \\
&\times\sup_{-\|x-\tilde{x}\|_{\infty}\leq z\leq \|x-\tilde{x}\|_{\infty}}|e^{\lambda z}-1|^{N-1}\|x-\tilde{x}\|_\infty,
\end{align}
for each $x_n\in U_n$ (note that this expression does not depend on $x_n$). 
Now, for each $(\gamma,0)\in\mathbb{N}_0^n$, $|\gamma|<N$, we have that $D^{\lambda,((\gamma,0))}a(\tilde{x}',x_n)$ is of class $C^{k-|\gamma|}$ in $U_n$, so we may use Theorem \ref{teo1} to expand it in it's exponential Taylor series centered at $\tilde{x}'$ up to order $N-|\gamma|$, so that by Theorem \ref{teo1}, 
\begin{align*}
D^{\lambda,((\gamma,0))}a(\tilde{x}',x_n)&=\sum_{j=0}^{N-|\gamma|}\frac{1}{j!} D^{\lambda,((\gamma,j))}a(\tilde{x}',\tilde{x}_n)(e^{\lambda(x_n-\tilde{x}_n)}-1)^j+\\
&+R_{N-|\gamma|,\gamma}(x,\tilde{x}),
\end{align*} 
where
\begin{align*}
|R_{N-|\gamma|,\gamma}(x,\tilde{x})|&\leq |\lambda|\frac{N}{(N-|\gamma|)!}\sup_{y\in [[x_n,\tilde{x}_n]]}|D^{\lambda,(\gamma,N-|\gamma|)}a(\tilde{x}',y)|\\
&\times\sup_{-|\tilde{x}_n-x_n|\leq z\leq |\tilde{x}_n-x_n|}|e^{\lambda z}-1|^{N-1-|\gamma|}|x_n-\tilde{x}_n|\\
&\leq|\lambda|\frac{N}{(N-|\gamma|)!}\sup_{y\in Q(x,\tilde{x})}|D^{\lambda,(\gamma,N-|\gamma|)}a(y)|\\
&\times\sup_{-\|x-\tilde{x}\|_{\infty}\leq z\leq \|x-\tilde{x}\|_{\infty}}|e^{\lambda z}-1|^{N-1-|\gamma|}\|x-\tilde{x}\|_\infty.
\end{align*}
Now, substituting these expressions back in equation \eqref{eq1}, we obtain
\begin{align*}
a(x) = a(x',x_n) &= \sum_{\substack{|\gamma|<N\\\gamma\in\mathbb{N}_0^{n-1}}}\frac{1}{\gamma!}\sum_{j=0}^{N-|\gamma|-1}\frac{1}{j!} D^{\lambda,((\gamma,j))}a(\tilde{x}',\tilde{x}_n)(e^{\lambda(x-\tilde{x})}-1)^{(\gamma,0)}(e^{\lambda(x-\tilde{x})}-1)^{(0,j)}\\
& + \sum_{\substack{|\gamma|<N\\\gamma\in\mathbb{N}_0^{n-1}}}\frac{1}{\gamma!}R_{N-|\gamma|,\gamma}(x,\tilde{x})(e^{\lambda(x-\tilde{x})}-1)^{(\gamma,0)} +  R_{N,x_n}(x',\tilde{x}').
\end{align*}
Note that the first term may be rewritten as
$$\sum_{\substack{|\gamma|<N\\ \gamma\in\mathbb{N}_0^n}}\frac{1}{\gamma!}D^{\lambda,(\gamma)}a(\tilde{x})(e^{\lambda(x-\tilde{x})}-1)^\gamma.$$
Moreover, since
$$\sup_{-k\leq z\leq k} |e^{\lambda z}-1|^{r} |e^{\lambda z}-1|^{s}= \sup_{-k\leq z\leq k} |e^{\lambda z}-1|^{r+s}.$$
and $$\{(\gamma,N-|\gamma|);\gamma\in\mathbb{N}_0^{n-1},\,|\gamma|<N\}\cup\{(\gamma,0);|\gamma|=N\} = \{\gamma\in\mathbb{N}_0^n;|\gamma|=N\},$$
setting the remainder as 
$$R_N(x,\tilde{x}) = \sum_{\substack{|\gamma|<N\\\gamma\in\mathbb{N}_0^{n-1}}}\frac{1}{\gamma!}R_{N-|\gamma|,\gamma}(x,\tilde{x})(e^{\lambda(x-\tilde{x})}-1)^{(\gamma,0)} +  R_{N,x_n}(x',\tilde{x}'),$$
and using the estimate \eqref{errorbasecase}, we have that: 
\begin{align*}
|R_N(x,\tilde{x})| &\leq|\lambda|\sum_{\substack{|\gamma|<N\\ \gamma\in\mathbb{N}_0^{n-1}}}\frac{1}{\gamma!}\frac{N}{(N-|\gamma|)!}\sup_{y\in Q(x,\tilde{x})}|D^{\lambda,(\gamma,N-|\gamma|)}a(y)|\times\\
&\times\sup_{-\|x-\tilde{x}\|_{\infty}\leq z\leq \|x-\tilde{x}\|_{\infty}}|e^{\lambda z}-1|^{N-1-|\gamma|}\|e^{\lambda z}-1|^{|\gamma|}\|x-\tilde{x}\|_\infty+\\
&+\left[\sum_{\substack{|\gamma| = N\\ \gamma\in\mathbb{N}_0^{n-1}}}\frac{N}{\gamma!}\sup_{y\in Q(x,\tilde{x})}|D^{\lambda,((\gamma,0))}a(y)|\right]\times\\
&\times\sup_{-\|x-\tilde{x}\|_{\infty}\leq z\leq \|x-\tilde{x}\|_{\infty}}|e^{\lambda z}-1|^{N-1}\|x-\tilde{x}\|_\infty\\
&= |\lambda|\left[\sum_{\substack{|\gamma|=N\\ \gamma\in\mathbb{N}_0^{n}}}\frac{N}{\gamma!}\sup_{y\in Q(x,\tilde{x})}|D^{\lambda,(\gamma)}a(y)|\right]\epsilon(\lambda,\|x-\tilde{x}\|_\infty)^{N-1} \|x-\tilde{x}\|_\infty,\\
\end{align*}
where
$$\epsilon(\lambda,r) = \sup_{-r\leq z\leq r}|e^{\lambda z}-1|.$$
\end{proof}

\begin{corollary}
Let $a\in C^{\infty}(U_1\times...\times U_n)$, $\tilde{x}\in U_1\times...\times U_n\subset \mathbb{R}^n$, and suppose that there exists $V\subset\mathbb{R}^n$ neighbourhood of $\tilde{
x}$, $A\geq0$ such that 
$$ \sup_{y\in V}|D^{\lambda,(\gamma)}a(y)|\leq A \gamma!,$$
for every multi-index $\gamma$. Then there exists $\delta_{\lambda}>0$  such that 
$$a(x) = \sum_{\gamma\in\mathbb{N}_0^n}\frac{1}{\gamma!}D^{\lambda,(\gamma)}a(\tilde{x})(e^{\lambda(x-\tilde{x})}-1)^\gamma$$ 
for every $x\in\mathbb{R}^n$ with $\|x-\tilde{x}\|_\infty<\delta_{\lambda}$.
\end{corollary}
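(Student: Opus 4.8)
The plan is to show that the remainder $R_N(x,\tilde x)$ furnished by the preceding theorem tends to $0$ as $N\to\infty$ for every $x$ in a sufficiently small $\|\cdot\|_\infty$-ball about $\tilde x$; since that theorem gives $a(x)=\sum_{|\gamma|<N}\frac{1}{\gamma!}D^{\lambda,(\gamma)}a(\tilde x)(e^{\lambda(x-\tilde x)}-1)^\gamma + R_N(x,\tilde x)$, letting $N\to\infty$ will yield the claimed identity. First I would fix $\delta_\lambda>0$ small enough to secure two things at once: that the box $Q(x,\tilde x)\subset V$ whenever $\|x-\tilde x\|_\infty<\delta_\lambda$ (possible since $V$ is a neighbourhood of $\tilde x$ and $Q(x,\tilde x)$ shrinks to $\{\tilde x\}$ as $x\to\tilde x$), and that $q:=\epsilon(\lambda,\delta_\lambda)<1$. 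The latter is available because $z\mapsto|e^{\lambda z}-1|$ is continuous and vanishes at $z=0$, so $\epsilon(\lambda,r)\to 0$ as $r\to 0^+$; since $r\mapsto\epsilon(\lambda,r)$ is nondecreasing, we then have $\epsilon(\lambda,\|x-\tilde x\|_\infty)\le q$ throughout the ball.

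Next I would feed the growth hypothesis into the theorem's remainder bound. For $\|x-\tilde x\|_\infty<\delta_\lambda$ we have $Q(x,\tilde x)\subset V$, so $\sup_{y\in Q(x,\tilde x)}|D^{\lambda,(\gamma)}a(y)|\le A\,\gamma!$ for every $\gamma$, whence
\[
\sum_{|\gamma|=N}\frac{N}{\gamma!}\sup_{y\in Q(x,\tilde x)}|D^{\lambda,(\gamma)}a(y)|\ \le\ AN\sum_{|\gamma|=N}1\ =\ AN\binom{N+n-1}{n-1},
\]
the count of multi-indices in $\mathbb{N}_0^n$ of order $N$ being $\binom{N+n-1}{n-1}$ by stars and bars. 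Substituting into the theorem gives
\[
|R_N(x,\tilde x)|\ \le\ |\lambda|\,A\,N\binom{N+n-1}{n-1}\,q^{N-1}\,\delta_\lambda .
\]

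The conclusion then rests on the elementary observation that $N\binom{N+n-1}{n-1}$ is a polynomial in $N$ (of degree $n$), while $q^{N-1}$ decays geometrically because $q<1$; hence the right-hand side tends to $0$ (indeed uniformly on the ball), which establishes the identity. To make the unordered sum over $\mathbb{N}_0^n$ meaningful I would additionally record absolute convergence: using the hypothesis at $y=\tilde x$ together with $|(e^{\lambda(x-\tilde x)}-1)^\gamma|\le q^{|\gamma|}$, each term is at most $A\,q^{|\gamma|}$, so $\sum_{\gamma\in\mathbb{N}_0^n}A\,q^{|\gamma|}=A\sum_{N\ge 0}\binom{N+n-1}{n-1}q^N=A(1-q)^{-n}<\infty$.

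The only genuinely delicate point is the calibration encoded in the choice of $\delta_\lambda$: one must guarantee $\epsilon(\lambda,\delta_\lambda)<1$ so that the geometric factor $q^{N-1}$ overwhelms the polynomial count $N\binom{N+n-1}{n-1}$ of top-order terms. Everything else—the containment $Q(x,\tilde x)\subset V$, the stars-and-bars count, and the fact that geometric decay dominates polynomial growth—is routine.
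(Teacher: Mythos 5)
Your proposal is correct and follows essentially the same route as the paper's own proof: choose $\delta_\lambda$ so that the ball lies in $V$ and $\epsilon(\lambda,\delta_\lambda)<1$, feed the bound $\sup_{y\in Q(x,\tilde x)}|D^{\lambda,(\gamma)}a(y)|\le A\gamma!$ into the theorem's remainder estimate so the $\gamma!$ factors cancel, count the multi-indices of order $N$ by stars and bars, and let the geometric factor kill the polynomial one. Your closing remark on absolute convergence of the unordered sum over $\mathbb{N}_0^n$ is a small but worthwhile addition that the paper omits.
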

\begin{proof}
Note that in the notation of the previous theorem, it is enough to show that $|R_{N}(x,\tilde{x})|\to 0$ as $N\to \infty$. Also note that
$$ \#\{\gamma\in\mathbb{N}^n_0 ; |\gamma| = N\} = {N+n-1\choose N} = \frac{N^{n-1}}{(n-1)! }+O(N^{n-2}).$$
Therefore, using the fact that $\mathbb{R}\to\mathbb{C}:z\mapsto e^{\lambda z}-1$ is continuous at $z=0$, and taking $\delta_\lambda>0$ small enough so that $|e^{\lambda z}-1|\leq \alpha<1$ for $|z|<\delta_\lambda$ and $\{x;\|x-\tilde{x}\|_\infty<\delta_\lambda\}\subset V$, for $\|x-\tilde{x}\|_{\infty}<\delta_\lambda$ we have that
\begin{align*}
|R_N(x,\tilde{x})| &\leq |\lambda|\left[\sum_{|\gamma| = N}\frac{N}{\gamma!}\sup_{y\in Q(x,\tilde{x})}|D^{\lambda,(\gamma)}a(y)|\right]\epsilon(\lambda,\|x-\tilde{x}\|_\infty)^{N-1} \|x-\tilde{x}\|_\infty\\
&\leq|\lambda|\left(\frac{N^{n-1}}{(n-1)!}+O(N^{n-2})\right)\frac{N}{\gamma!}A\gamma!\alpha^{N-1}\delta_{\lambda}\\
&=A\delta_{\lambda}|\lambda|\left(\frac{N^{n}}{(n-1)!}+O(N^{n-1})\right)\alpha^{N-1}\to 0,
\end{align*}
as $N\to\infty$, so the result follows.
\end{proof}

\begin{exmp}
Let $a\in C^{\infty}(\mathbb{R}^2)$ be given by $a(x,y) = \cos(2\pi(x+y))$. Induction and a simple calculation show that
\begin{align*}
D^{(2\pi i),(j,k)}\cos(2\pi(x+y) = \frac{(-1)^{j+k}}{2}j!k!e^{-2\pi i(x+y)},
\end{align*}
if $j\geq 2$ or if $k\geq 2$. Calculating the other low order terms individually, taking the exponential taylor expansion of $a$ at $\tilde{x} = (0,0)$, we have that
\begin{align*}
\cos(2\pi(x+y)) &= 1+ (e^{2\pi ix}-1)(e^{2\pi iy}-1)\\
&+ \sum_{j=2}^\infty\sum_{k=0}^\infty\frac{(-1)^{j+k}}{2}(e^{2\pi ix}-1)^j(e^{2\pi iy}-1)^k\\
&+ \sum_{j=0}^\infty\sum_{k=2}^ \infty\frac{(-1)^{j+k}}{2}(e^{2\pi ix}-1)^j(e^{2\pi iy}-1)^k,
\end{align*}
for $|x|<\frac{1}{6}$ and $|y|<\frac{1}{6}$. Notice that by taking $x=y=\frac{1}{12}$, we are able to calculate $\cos(2\pi/6)$, which was not possible using the series expansion centered at $0$ of $\cos(2\pi x)$, that is, we manage to ``avoid the singularity" at that point.
\end{exmp}

\end{document}